\def\BibTeX{{\rm B\kern-.05em{\sc i\kern-.025em b}\kern-.08em
    T\kern-.1667em\lower.7ex\hbox{E}\kern-.125emX}}
\DeclareMathOperator{\R}{\mathbb{R}}
\def \E{\mathbb{E}}
\def\1{{\bf 1}}
\def \N{\mathbb{N}}
\def\Nc{{\cal N}}
\def\argmin_#1{\underset{#1}{\mathrm{argmin\, }}}
\def \trans{^{\scriptscriptstyle{\intercal}}}
\newcommand{\dps}[1]{\displaystyle{#1}}
\def \trans{^{\scriptscriptstyle{\intercal}}}
\newtheorem{Rem}{Remark}
\newtheorem{Prop}{Proposition}
\title{The GroupMax neural network approximation of convex functions.}
\author{ Xavier \textsc{Warin}
\footnote{EDF R\&D \& FiME \sf \href{mailto:xavier.warin at edf.fr}{xavier.warin at edf.fr}} }
\begin{document}

\maketitle
\begin{abstract}
    We present a new neural network to approximate convex functions. This network has the particularity to approximate the function with cuts which is, for example, a necessary feature to approximate Bellman values when solving linear stochastic optimization problems.
    The network can be easily adapted to partial convexity.
    We give an universal approximation theorem in the full convex case and give many numerical results proving it efficiency. The network is competitive with the most efficient convexity preserving neural networks and can be used  to approximate functions in high dimension.
\end{abstract}

\section{Introduction}
Neural networks are an effective tools to approximate function  and numerically generally outperform  classical regression  using an expansion on a function basis. Some classical Universal Approximation theorem for neural network with bounded depth are given in \cite{cybenko,hornik} when the activation function is non polynomial.  A "dual" result is given in \cite{Gripenberg} where the number of  the hidden layers can be taken arbitrary large with bounded  widths if the activation function is non affine, continuous and twice continuously differentiable. Recently, \cite{lyons} slightly improves the results with only non affine, continuous and continuously differentiable activation functions.

The approximation of a convex function  has recently been theoretically investigated for example in \cite{gaubert2018} for a one-layer feedforward neural network with exponential activation functions in the inner layer and a logarithmic activation at the output.
Numerically this problem has been investigated in \cite{amos2017input} developing the Input Convex Neural Network (ICNN) methodology.
This approach is effective and has been widely used in many applications where the convexity of the approximation is required, for example in optimal transport problems \cite{makkuva2020optimal,korotin2019wasserstein}, in optimal control problems as in \cite{chen2018optimal,agrawal2020learning}, in inverse problems \cite{mukherjee2020learned}, or in general optimization problems \cite{chen2020input}  just to quote some of them.

In some cases, a  simple convex approximation of the function in not sufficient. For example, in multi stage  stochastic linear optimization, some methods such as  the Stochastic Dual Dynamic Programming  method  (SDDP) \cite{shapiro2011analysis} solve  transition problems starting from a state  using an  approximation by cuts of the Bellman function at the end of the transition period. This transition problem is  solved  using a Linear Programming solver.
The cut approximation of a convex function using regression methods  has already been investigated in  \cite{balazs2015near,ghosh2019max} or  \cite{ghosh2022} leading to some  max affine approximation.

This article proposes a different approach using a new network permitting to efficiently approximate a convex function by cuts using some ideas similar to the GroupSort network developed in \cite{anil2019sorting} and analyzed in \cite{tanielian2021approximating}.

In a first part, we present the network  supposing  that the function to approximate is convex with respect to  the input.  An universal approximation theorem is then given for this full convex case.
Then,  supposing that the function is only convex with respect to a part of the input, we extend our representation using conditional cuts.
In a second part some numerical examples are given showing that the network is clearly superior to a  network generating simple cuts and is competitive with  the ICNN method.
At last  a conclusion is given. 

\section{The GroupMax Network}
\noindent Let $f$ be a real valued convex function defined on $\R^d$.
We have the following representation \cite[Chapter 3.2.3]{boyd2004convex}: Define $\tilde f$ as:
\begin{align}
    \tilde f(x) =  \sup \{ g(x) | \, g(x) \mbox{ affine}, g(z) \le f(z)\}\label{eq:fConvSup}
\end{align}
then $\tilde f(x) = f(x)$ for $x \in \mbox{int Dom}(f)$.
In the sequel an approximation by cuts of a convex function will refer to a max-affine affine representation \eqref{eq:fConvSup}. A cut is one of the affine functions in this max-affine representation.

From equation \eqref{eq:fConvSup}, we may think of a first single layer network $h$:
\begin{equation}
    h^{\theta}(x)= \max_{i=1}^N A_i.x + b_i
    \label{eq:simpleCut}
\end{equation}
where $A_i \in \R^{d}$,  $b_i \in \R$, and $\theta = (A_i)_{i=1,N} \cup (B_i)_{i=1,N}$ is the set of parameters to optimize. In this single layer network, $N$ is the number of neurons and the $\max$ function is not applied componentwise but on the global vector. 
Then trying to approximate the convex function $f$ on  $\mathcal{D}$ we solve
\begin{equation}
 \theta^* = \argmin_{\theta} \E( (f(X) - h^\theta(X))^2) 
 \label{eq:minEq}
\end{equation}
where $X$ is a random variable  for example uniformly distributed  in $\mathcal{D}$. As we will see later, this first network is too simple and leads to bad approximations. We then develop a new network generating cuts with a given number of layers.
\subsection{A Network for Fully Convex Functions}
\label{par:fullConv}
\subsubsection{The Network with q Layers}
\noindent We impose to simplify that $M$, the number of neurons, is kept constant for all layers and we  define the group size $G$ as in \cite{anil2019sorting} such that $K=\frac{M}{G}$ is an integer corresponding to  the number of groups.
For $x \in \R^d$, the network is defined by recurrence as
\begin{align}
    z^{1}= & \rho( A^1 x+ B^1)  \\
   z^{i}= & \rho( (A^i)^{+} z^{i-1}+ B^i)  1 < i < q \\
    h^\theta(x)=  & \hat \rho(  (A^q)^{+} z^{q-1} +B^q) 
 \label{eq:convexNet}
\end{align}
where $q$ is the number of layers, $y^+ = \max(y,0)$, $A^1 \in \R^{M,d}$, $B^1 \in \R^{M}$, $A^j \in \R^{M \times K}$, $B^j \in \R^{M}$, $j=2, \dots, q$, defining  the set of parameters as $\theta = (A^j)_{j=1,q} \cup (B^j)_{j=1,q}$.

The activation function $\rho$ and $\tilde \rho$ are defined as follows:
\begin{itemize}
    \item $\hat \rho$ is a $\R$ valued function defined on $\R^M$ where 
    \begin{equation}
    \hat \rho (x) = \max(x_1, \dots, x_M) \mbox{ for } x \in \R^M
\end{equation}
\item $\rho$ is a function   from $\R^M$ in $\R^K$ such that 
\begin{align*}
  \rho(x)_i = \max (x_{1+(i-1)G}, \dots, x_{iG}) \mbox{ for }  i=1, \dots, K
\end{align*}
\end{itemize}
and an example of the structure of the network given in Fig. \ref{fig:gMaxStruc}. \\
\begin{figure}[!t]
    \centering
    \includegraphics[width=\linewidth]{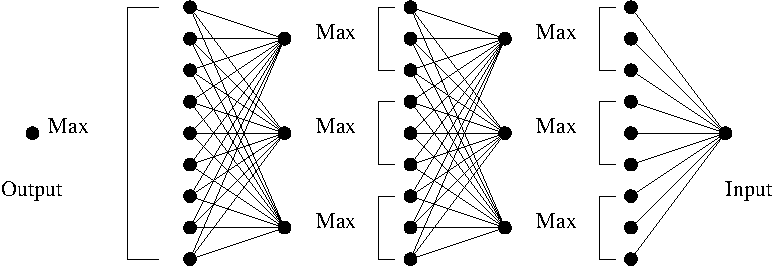}
    \caption{Network structure  in dimension 1, group size $G=3$, $K=3$ groups and 3 layers of $M=9$ neurons. }
    \label{fig:gMaxStruc}
\end{figure}
This network gives an approximation of $f$ by some cuts:
clearly by positivity of the $(A_i)^{+}$, we get that
\begin{flalign}
h^\theta(x) = & \max_{i_q \in [1,M]} \max_{ \scriptsize \begin{array}{c}
     i_j  \in [1,G ], \\
      j=1, \dots, q-1
\end{array}} \sum_{\scriptsize \begin{array}{c} k_j =1\\  j=1,q-1 \end{array}}^K  (A^q)^{+}_{i_q, k_{q-1}} \\
& \prod_{n=1}^{q-2} (A^{n+1})^{+}_{i_{n+1}+(k_{n+1}-1)G, k_{n}}  \sum_{m=1}^d A^1_{i_1+(k_1-1)G,m} x_m \\
& +C 
\label{eq:cutFor}
\end{flalign}
where $C$ is function of the  $B^i$ , $i=1,\dots,q$ and $A^i$, $i=1, \dots,q-1$. Then the number of cuts can be calculated as  $ M G^{K(q-1)}$.
\begin{Rem}
Using $M$ neurons on the first layer and $\tilde{M} \ge \frac{M}{G}$ neurons on the following layers, we notice that if we take  $B^j=0$ for $1< j \le q$, and $A^j$ null except $(A^j)_{i,i}=1$, for $i=1, \dots, \frac{M}{G}$ for $1 < j \le q$, then the cuts generated by the one layer network are the same as the cuts generated by the $q>1$ layers network. Then cuts generated by the network \eqref{eq:simpleCut}  can be reached with the network \eqref{eq:convexNet}.
\label{remark1}
\end{Rem}
\begin{Rem}
Using equation \eqref{eq:cutFor}, it is possible to reconstruct the underlying cuts which is, for example, necessary to solve  stochastic linear optimization  problems using Benders cuts.
\end{Rem}

\subsubsection{Universal Approximation Theorem}
We denote $\mathcal{N}(M_1,\dots,M_q,K)$ the set of generated functions $h^{\theta_{M_1, \dots,M_q,K}}$ by the network \eqref{eq:convexNet} with $\theta_{M_1,..,M_q,K} \in \R^{M_1 (d+1) + \sum_{i=2}^q M_i (K+1)}$ where the number of neurons for layer $q$ is $M_q$. We introduce the space of functions generated letting the number of neurons vary:
\begin{align}
    \mathcal{N}(K,q) = \cup_{(M_1,\dots, M_q) \in \N^q} \mathcal{N}(M_1,\dots,M_q,K)
    \label{eq:GroupMaxConv}
\end{align}
\begin{Prop}
Let $f$ be a convex function on $\R^d$, then $\mathcal{N}(K,q)$ approximates arbitrarily well $f$ by below on every compact $\hat K$ for the sup norm.
\end{Prop}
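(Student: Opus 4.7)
The plan is to reduce to the one-layer case and then invoke the classical fact that a convex function on a compact set is arbitrarily well approximated from below by the maximum of finitely many of its affine minorants.

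By Remark \ref{remark1}, every function in $\mathcal{N}(K,1)$ also lies in $\mathcal{N}(K,q)$: choosing $B^j=0$ and $A^j$ equal to the identity on the relevant block for $1<j\le q$ collapses the iterated network down to its first layer. For $q=1$ the network reduces to
\begin{equation*}
h^\theta(x)=\hat\rho(A^1 x+B^1)=\max_{i=1,\dots,M}\bigl((A^1)_i\cdot x + B^1_i\bigr),
\end{equation*}
which, as $M$ ranges over $\N$, exhausts all max-affine functions with finitely many pieces. It therefore suffices to prove: for every convex $f:\R^d\to\R$, every compact $\hat K\subset\R^d$, and every $\varepsilon>0$, there exists a max-affine function $h$ with $h\le f$ on $\R^d$ and $\sup_{\hat K}|f-h|\le\varepsilon$.

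To produce such an $h$, I would fix a compact neighbourhood $\hat K'\supset\hat K$ on which $f$ is Lipschitz with some constant $L$ (finiteness plus convexity on $\R^d$ gives local Lipschitz continuity). Set $\delta=\varepsilon/(2L)$ and pick a finite $\delta$-net $\{x_1,\dots,x_N\}\subset\hat K$. At each $x_i$ choose a subgradient $g_i\in\partial f(x_i)$; by the subgradient inequality the affine map $\ell_i(x)=f(x_i)+g_i\trans(x-x_i)$ satisfies $\ell_i\le f$ on all of $\R^d$. Define $h=\max_{i=1,\dots,N}\ell_i$. Then $h\le f$ globally, and for any $x\in\hat K$, picking $x_i$ with $\|x-x_i\|\le\delta$ gives
\begin{equation*}
f(x)-h(x)\le f(x)-\ell_i(x)\le L\|x-x_i\|+L\|x-x_i\|\le 2L\delta=\varepsilon,
\end{equation*}
so $h$ is an $\varepsilon$-accurate lower approximation on $\hat K$. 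Since $h$ is max-affine with $N$ pieces, $h\in\mathcal{N}(K,1)\subseteq\mathcal{N}(K,q)$, which finishes the argument.

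I do not anticipate a substantive obstacle: the only non-routine step is invoking Remark \ref{remark1} to transfer the one-layer construction into the deep network, which the author has already prepared. Everything else is classical convex analysis, namely existence of subgradients at interior points of the domain (here all of $\R^d$), the pointwise sub-tangent inequality, and uniform approximation on $\hat K$ by a sup over a finite $\delta$-net, controlled by the Lipschitz constant on a slightly enlarged compact.
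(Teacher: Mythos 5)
Your proof is correct, and it takes a genuinely different route from the paper's for the core step. Both arguments begin identically, invoking Remark \ref{remark1} to reduce the statement to the approximation of $f$ from below by a finite max-affine function, and both rest on the existence of global affine minorants tangent at a point (you use the subgradient inequality directly; the paper cites Proposition A of Guessab--Schmeisser for the same fact). Where you diverge is in how finiteness of the cut family is obtained. The paper argues by contradiction: assuming no finite collection of cuts achieves accuracy $\epsilon$, it iteratively adds a tangent cut at each failure point $x_i$, extracts a convergent subsequence $\tilde x_i \to \tilde x$, and plays the persistent gap $f(\tilde x) - g(\tilde x) \ge \epsilon$ against the interpolation property $f(x_i) = g(x_i)$ to reach a contradiction via continuity of $f$ and of $g = \sup_i g^i$. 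Your argument is instead direct and constructive: local Lipschitz continuity of the finite convex function $f$ on a compact neighbourhood $\hat K'$ bounds both the increments of $f$ and the norms of the subgradients $g_i$ by the same constant $L$, so tangent cuts placed on a $\delta$-net with $\delta = \varepsilon/(2L)$ suffice. What your approach buys is a quantitative statement -- an explicit bound on the number of cuts, hence on the width $M$ of the first layer, in terms of $L$, $\varepsilon$ and the covering numbers of $\hat K$ -- whereas the paper's compactness argument yields only existence. The paper's argument, on the other hand, adds cuts adaptively only where the current approximation fails, which is closer in spirit to how such networks are trained, but as written it is less self-contained (the continuity of the supremum $g$ and the finiteness of the construction require a little more care than your net-based bound).
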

\begin{proof}
Due to remark \ref{remark1}, it is sufficient to prove that for a given function $f$, for each $\epsilon$,  there exists a finite number of cuts $(A_i,B_i)_{i=1,M}$   such that $g(x)= \displaystyle{\max_{i=1,M}} A_i x  + B_i$ and such that $g(x) \le f(x)$  and $\displaystyle{\sup_{x \in \hat K}} f(x) -g(x) \le \epsilon$. 

Let suppose the converse. There exist $\epsilon$, such that for  $n_0$ being chosen arbitrary, and whatever the cuts we take generating a function $g^{0}$ below $f$, then there exists $x_0$ such that $f(x_0) -g^0(x_0) > \epsilon$.
Using \cite[Proposition A]{guessab2004convexity}, we can generate a cut $(A,B)$ such that $f(x_0)= A x_0 +b$ and $f(x) \ge Ax +B$ on $\hat K$. Let use define, $g^{1}(x) = max( g^{0}(x), A x+B)$. Due to the hypothesis, there there exist $x_1$ such that $f(x_1) -g^1(x_1) > \epsilon$ and  we can build  a sequence $(x_i, g^i)$, $i \ge 0$ such that $g^i$ is below $f$ and  $ f(x_i) -g^j(x_i) \ge \epsilon$  for all $j \le i$ and $ f(x_i) = g^{j}(x_i)$ for $j > i$.

We can extract a sequence $\tilde x^i$ from $(x_i)_{i \ge 0}$ that converges to $\tilde x \in \hat K$. As $f$ is convex on $\R^d$ it is continuous on  $\hat K$.
Then letting $i$ go to infinity in 
$ f(\tilde x_i) -g^j(\tilde x_i) \ge \epsilon$ for $j$ fixed, and using $g^j$ continuity we get $f(\tilde x) -g^j(\tilde x) \ge \epsilon$ for all $j$.  Defining $ g = \displaystyle{\sup_{i >0} } \; g_i$ which is convex so continuous, we get that $f(\tilde x) -g(\tilde x) \ge \epsilon$.

Starting from $ f(x_i) = g^{j}(x_i)$ for $j > i$ and first letting $j$ to infinity, we get $ f(x_i) = g(x_i)$. Letting $i$ go to infinity and using the continuity of $g$ and $f$ we get the contradiction.
\end{proof}

\subsection{An Extension for Partial Convex Functions}
 \noindent We suppose that  $ x= ( \tilde x,y) \in \R_{d}$ where we have convexity in $y \in \R^{k}$,
We simply modify our network as done in   \cite{amos2017input} to take into account  the non convexity in $\tilde x$:  other networks tested could not outperform the one  proposed  in \cite{amos2017input}.
The recursion is given by:
\begin{flalign}
u_0 =& \tilde x ,  \quad z_0= 0\nonumber \\
u_{i+1} = &  \tilde \rho( \tilde W_{i+1} u_i + \tilde b_{i+1})  \nonumber \\ 
z_{i+1} = & \rho( [W^{(z)}_{i+1} \otimes (W^{(zu)}_{i+1} u_i + b^{(z)}_{i+1}) ]^+  z_i   +  \nonumber\\ & W^{(y)}_{i+1}( y \circ (W^{(yu)}_{i+1} u_i + b^{(y)}_{i+1})) + W^{(u)}_{i+1} u_i + b_{i+1}) \\
&  \mbox{ for } i <  q-1 \nonumber \\
h^{\theta}(\tilde x,y)= &  \hat\rho( [W^{(z)}_q \otimes (W^{(zu)}_q u_{q-1} + b^{(z)}_q) ]^+  z_q   + \nonumber \\ & W^{(y)}_q( y \circ (W^{(yu)}_q u_{q-1} + b^{(y)}_q)) + W^{(u)}_q u_{q-1} + b_q)
\label{eq:GroupMax}
\end{flalign}
where $\circ$ denote the Hadamard product, $\otimes$ is applied between a matrix $A \in \R^{m \times n}$ and a vector $B \in \R^n$ such that $A \otimes B \in \R^{m \times n}$ and $(A \otimes B)_{i,j}=  A_{i,j} B_j$. 

In recursion \eqref{eq:GroupMax}, $\tilde \rho$ is a classical activation function such as the ReLU function.
Using $m_x$ neurons for the non convex part of  the function and $m_y$ neural networks for the convex part, $\tilde W_0 \in \R^{m_x \times (d-k)}$, $\tilde W_i \in \R^{m_x \times m_x}$ for $i>0$,  $W^{(zu)}_i \in \R^{m_y \times m_x}$ for $i > 0$, $W^{(z)}_i \in \R^{m_y \times m_y}$ for $i \le q$. We do not detail  the size of the different matrices $W^{(y)}$, $W^{(yu)}$, $W^{(u)}$  and the different biases that are obvious.  $\theta$ is the set of all these weights and bias.
Introducing 
\begin{align*}
    A^i(\tilde x)= & [W^{(z)}_i \otimes (W^{(zu)}_i u_{i-1} + b^{(z)}_i) ]^+ \\
    \tilde{A}^i(\tilde x) = &  W^{(y)}_i \otimes( W^{(yu)}_i u_{i-1} + b^{(y)}_i)\\
    B^i(\tilde x)= & W^{(u)}_i u_{i-1} + b_i
\end{align*}
where we can note by recurrence that $u_i$ is a  non linear function of $ \tilde x$, equation \eqref{eq:GroupMax} can be rewritten as
\begin{align*}
    (z_{i+1})_j= &  \max_{ l \in [1,G]}  [A^{i+1}(\tilde x) z_i + \tilde A^{i+1}(\tilde x) y +  B^{i+1}(\tilde x)]_{(j-1)G+l} \\
    & \quad  \mbox{ for  }  j=1, \dots, K, \mbox{ and  } i < q-1 \\ 
    h^\theta(\tilde x,y)= &\max_{l \in [1, m_y]} [A^q(\tilde x) z_{q-1} + \tilde A^q(\tilde x) y + B^q(\tilde x)]_{l}
\end{align*}
Similarly to section \ref{par:fullConv} ,
\begin{align}
    h^\theta( \tilde x,y)= \max_{l=1, \dots, m_y G^{K  (q-1) }}[A(\tilde x) y + B(\tilde x)
    \label{eq:GroupMaxASCut}
\end{align}
where $A(\tilde x) \in \R^{m_y G^{ K(q-1) } \times k}$ is a function of the $(A^i(\tilde  x))_{i=1,\dots,q}$ and $(\tilde A^i(\tilde  x))_{i=1,\dots,q}$ matrices. Similarly  $B(\tilde  x) \in \R^{m_y G^{ K (q-1)}}$ is a function of  the $(B^i( \tilde x))_{i=1,\dots,q}$, $(A^i(\tilde x))_{i=1,\dots,q-1}$ and $(\tilde A^i(\tilde x))_{i=1,\dots,q-1}$. Then it is clear that this recursion permits to define some cuts conditional to $ \tilde x$.

\section{Numerical Results}
\noindent In all numerical results, we use tensorflow \cite{abadi2016tensorflow} with an ADAM gradient descent algorithm \cite{kingma2014adam}.
\subsection{Some One Dimensional Results}
\noindent We consider the convex function $f(x) = f_i(x)$ for case $i=1,\dots,5$ where
\begin{enumerate}
   \item $f_1(x)= x^2 $ 
   \item $f_2(x)= x^2 +  10 [(e^x-1) 1_{x<0} +x 1_{x\ge 0}]$ 
     \item $f_3(x) = (|x|^2+1)^2$
      \item $f_4(x) = |x| 1_{|x| \le 3} + \frac{x^2-3}{2}$
\end{enumerate}
We regress  $f(x) + \epsilon$  with respect to $x$ where $\epsilon \sim \Nc(0,1)$ using 20000 gradient iterations, a batch size equal to $300$ and a learning rate equal to $1e-3$.
We then solve equation \eqref{eq:minEq} using $X \sim \mathcal{N}(0,4)$.
On Fig. \ref{fig:simpleCut1}, we see that the simple approximation \eqref{eq:simpleCut} gives visually not very good results and that the solution does not improve as we increase the number of cuts.
\begin{figure}[!t]
\centering
  \subfloat[$f_1$ ]{%
       \includegraphics[width=0.45\linewidth]{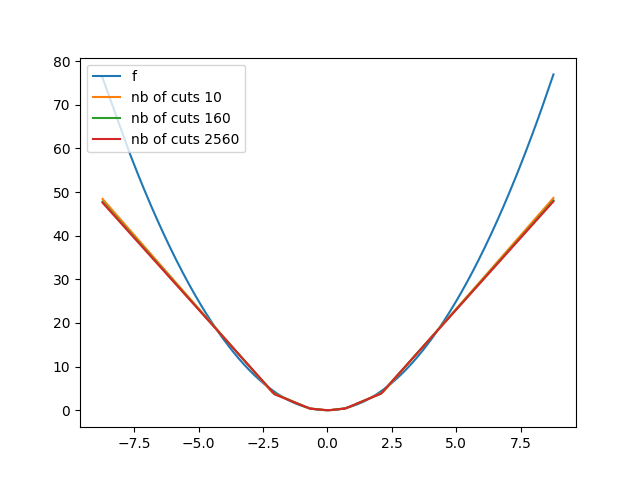}}
  \subfloat[$f_2$]{%
        \includegraphics[width=0.45\linewidth]{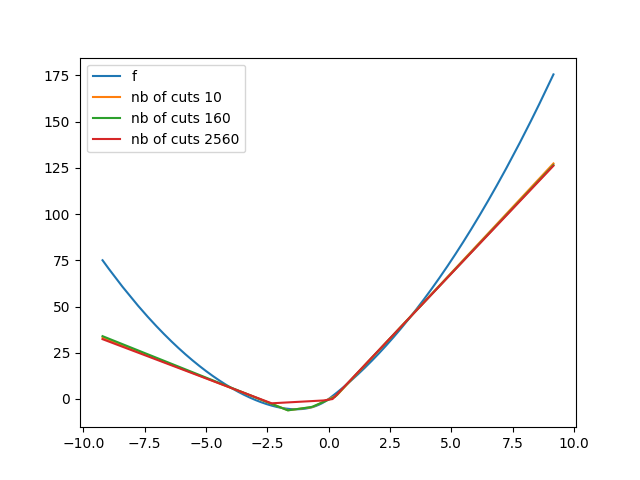}}
        \hfill
  \subfloat[$f_3$ ]{%
       \includegraphics[width=0.45\linewidth]{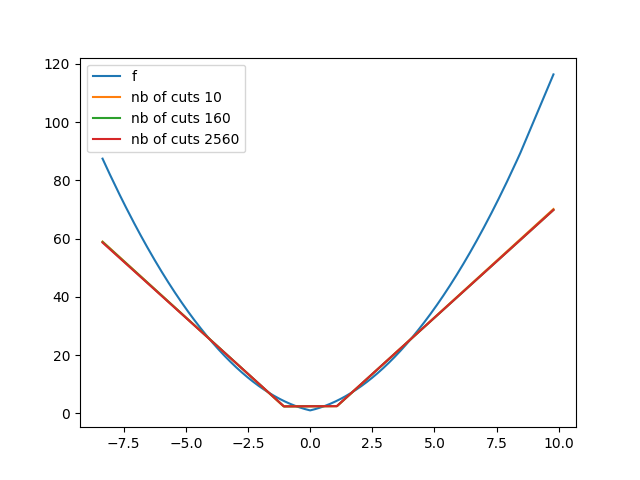}}
  \subfloat[$f_4$]{%
        \includegraphics[width=0.45\linewidth]{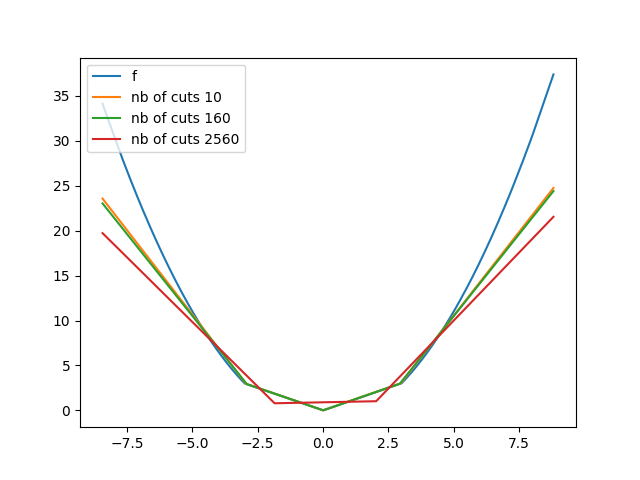}}
    \caption{ Solution of equation \eqref{eq:minEq} using network \eqref{eq:simpleCut} with different number of cuts. \label{fig:simpleCut1}}
\end{figure}
It was however possible to get some rather accurate solutions except in the tails  using network \eqref{eq:simpleCut} by renormalizing {\bf  both the input $x$ and the output $f(x)+ \epsilon$} such that both are centered with a unit standard deviation. Results are given on Fig. \ref{fig:simpleCut1Renorm}.
\begin{figure}[!t]
  \centering
  \subfloat[$f_1$]{%
       \includegraphics[width=0.45\linewidth]{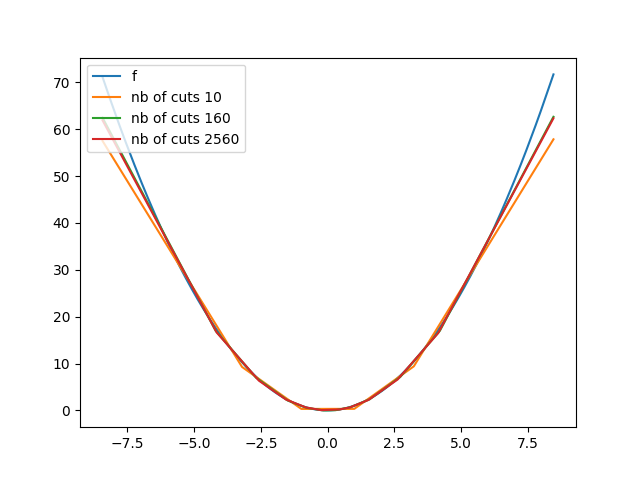}}
  \subfloat[$f_2$]{%
        \includegraphics[width=0.45\linewidth]{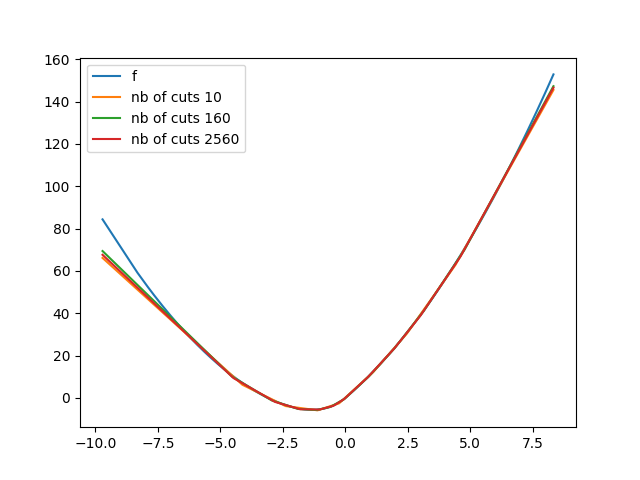}}
        \hfill
  \subfloat[$f_3$]{%
       \includegraphics[width=0.45\linewidth]{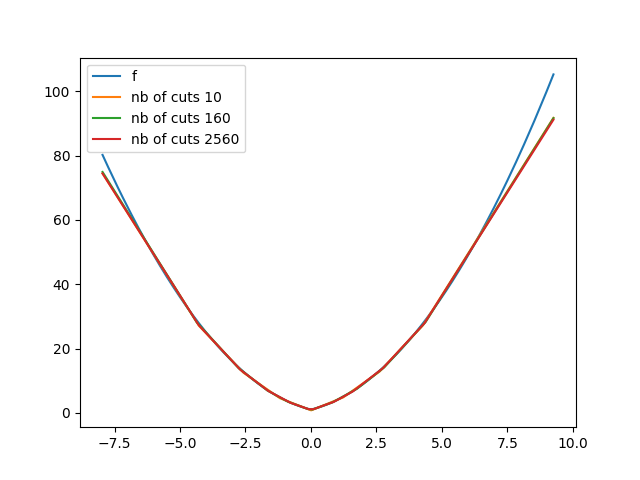}}
  \subfloat[$f_4$]{%
        \includegraphics[width=0.45\linewidth]{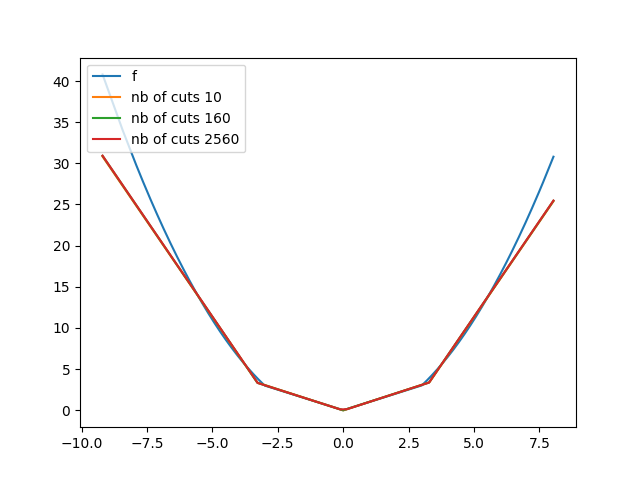}}
    \caption{ Solution of equation \eqref{eq:minEq} using network \eqref{eq:simpleCut} with different number of cuts using some input and output renormalization.\label{fig:simpleCut1Renorm}}
\end{figure}
On the Fig.  \ref{fig:comp1D} we plot the solution obtained using the GroupMax network, \cite{amos2017input} network and the feedforward network.  
For the GroupMax we use 3 layers with 10 neurons on each layer and the  group size $G=5$. As for the two other networks, we use 3 hidden layers with $10$ neurons and the ReLU activation function. 50000 gradient descent iterations are used.
\begin{figure}[!t]
  \centering
  \subfloat[$f_1$]{%
       \includegraphics[width=0.45\linewidth]{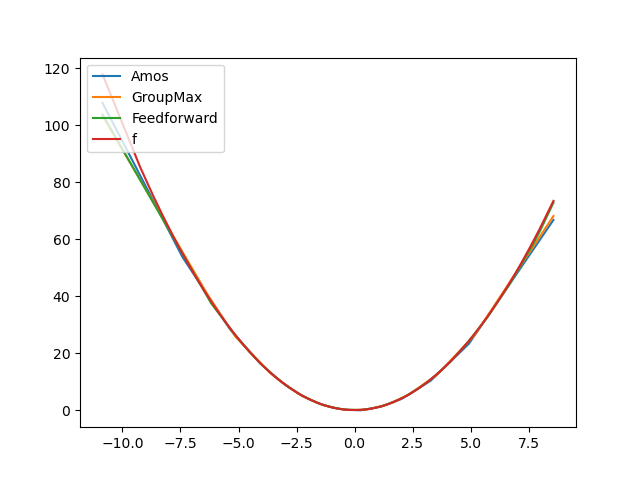}}
  \subfloat[$f_2$]{%
        \includegraphics[width=0.45\linewidth]{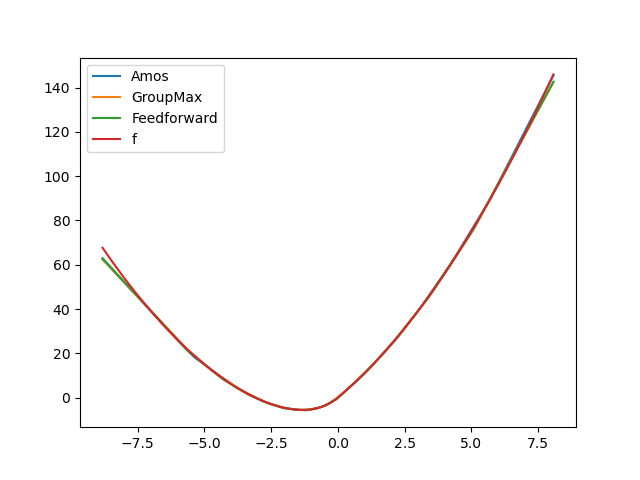}}
        \hfill
  \subfloat[$f_3$]{%
       \includegraphics[width=0.45\linewidth]{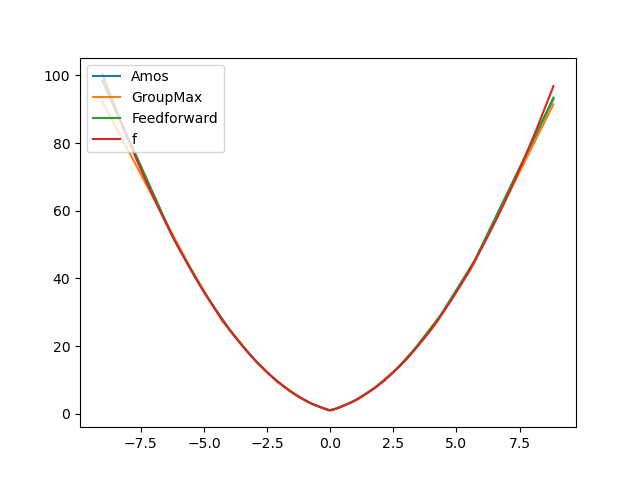}}
  \subfloat[$f_4$]{%
        \includegraphics[width=0.45\linewidth]{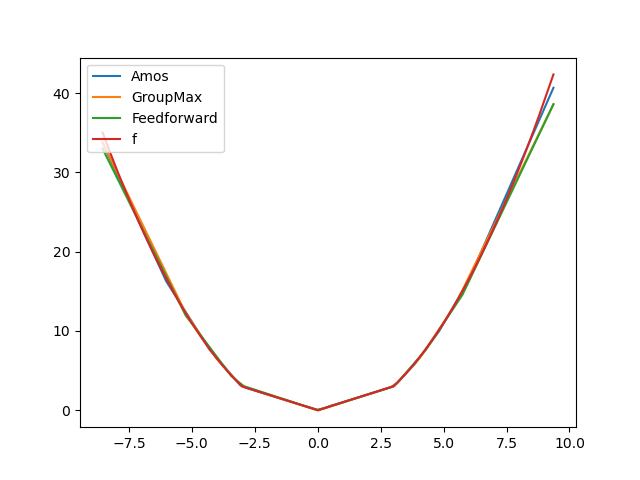}}
    \caption{ Solution of equation \eqref{eq:minEq} using the GroupMax,  the ICCN network \cite{amos2017input} and a feedforward network. \label{fig:comp1D}}
\end{figure}
The cuts generated on the previous examples by the GroupMax network  are given on Fig. \ref{fig:comp1DCut}.
\begin{figure}[!t]
  \centering
  \subfloat[$f_1$]{%
       \includegraphics[width=0.45\linewidth]{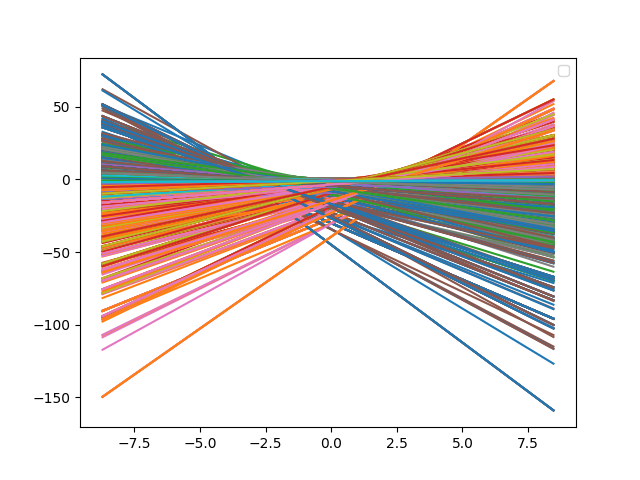}}
  \subfloat[$f_2$]{%
        \includegraphics[width=0.45\linewidth]{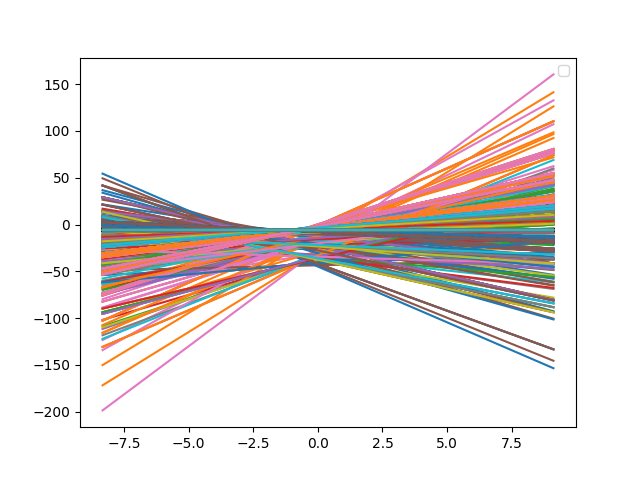}}
        \hfill
  \subfloat[$f_3$]{%
       \includegraphics[width=0.45\linewidth]{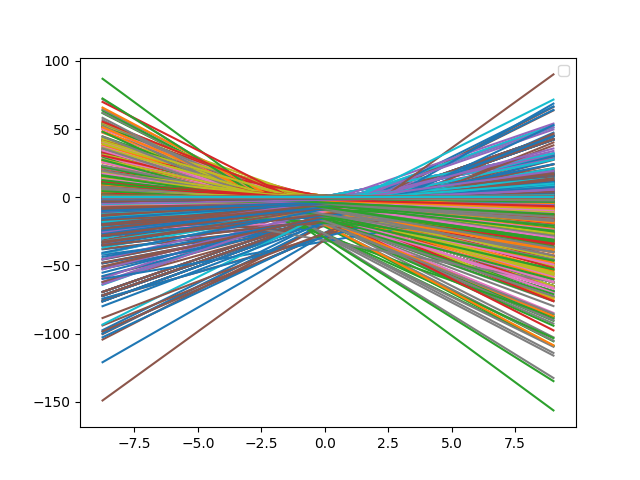}}
  \subfloat[$f_4$]{%
        \includegraphics[width=0.45\linewidth]{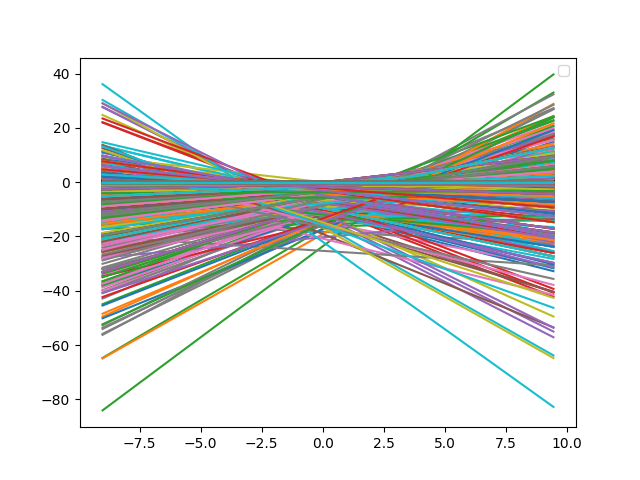}}
    \caption{ Cuts generated by the GroupMax network estimating solution of equation \eqref{eq:minEq} using $3$ layers and a group size of $5$.\label{fig:comp1DCut}}
\end{figure}
To see the accuracy of the GroupMax network as an interpolator, we optimize equation \eqref{eq:minEq}  trying to fit directly the function $f$ without any noise. Table \ref{tab:interp1D} gives the best MSE on $10$ test runs using Monte Carlo (1e6 samples). The solution obtained by the feedforward seems to be more accurate with the chosen parameters. Between \cite{amos2017input}, and the GroupMax it is hard to say which is the best.
\begin{table}
    \begin{center}
     \caption{ MSE for the Different Networks.}
    \label{tab:interp1D}
    \begin{tabular}{|c|c|c|c|c|}  \hline
  Network       & $f_1$ & $f_2$ & $f_3$ & $f_4$ \\ \hline
   Feedfoward      &  0.0004 &      0.0031  &  0.0004&  0.0001  \\ \hline
   \cite{amos2017input} network    &  0.0013 &  0.0050&  0.0019&  0.0006\\ \hline
  GroupMax network      &  0.0012  & 0.0014 &  0.0029 &  0.0002\\ \hline
    \end{tabular}
    \end{center}
\end{table}
In table \ref{tab:KSway1D}, we give the results obtained (best of $10$ runs) for different values of parameter $K$ using $12$ neurons per layer. As before the best of 10 runs is kept. 
\begin{table}
    \centering
        \caption{Influence of the Group Size on the MSE.}
    \label{tab:KSway1D}
   \begin{tabular}{|c|c|c|c|c|}  \hline
  $K$       & $f_1$ & $f_2$ & $f_3$ & $f_4$ \\ \hline
  2        & 0.0010  &  0.0015 &  0.0019 & 0.00028 \\ \hline
  4        & 0.0007  &  0.0012 & 0.0014  & 0.00023\\ \hline
6        & 0.0010 &  0.0018 & 0.0032  & 0.00038 \\ \hline
12        & 0.00071& 0.0071  & 0.0137  & 0.00085 \\ \hline
    \end{tabular}
\end{table}
The results seem to indicate that the group size should remain rather low.
\begin{table}
    \centering 
    \caption{Influence of the Number of Layers q on the MSE.}
    \label{tab:NLSway1D}
   \begin{tabular}{|c|c|c|c|c|}  \hline
  q       & $f_1$ & $f_2$ & $f_3$ & $f_4$ \\ \hline
  2        & 0.0073  &  0.0054 & 0.062  & 6.7e-4 \\ \hline
  3        & 0.0017  &  0.0010 & 0.021   &  5.9e-4\\ \hline
4        & 7.5e-4 &  0.0011 & 0.001  &  9.5e-5 \\ \hline
5        & 2.7e-4&  7e-4  & 4e-4  &  3.5e-5\\ \hline
    \end{tabular}
\end{table}
At last, the influence of the number of layers $q$ is given in table \ref{tab:NLSway1D} taking 12 neurons per layer and a group size of $2$ in the GroupMax network.  The best of $10$ runs is given. The accuracy clearly improves as we increase the number of layers.

\subsection{Testing Partial Convexity in 2D}
\noindent We suppose that we want to interpolate a function  which is convex only in its second dimension and test the error obtained 
in solving \eqref{eq:minEq} in two cases.
In the first case, we suppose that $X \sim \mathcal{N}(0,1)^2$ , then that $X \sim  \mathbf{U}([-2,2]^2)$.
We test the following functions convex in $y$:
\begin{enumerate}
   \item $f_5(x,y)= y^2 |x+2x^3|$ 
   \item $f_6(x,y)=  (1+|y|) |x+2x^3|$
   \item $f_7(x,y)=  y^{+} |x| + x^2$
\end{enumerate}
We keep the same parameters as before for the different networks. For the ICNN and the GroupMax network we take the same number of neurons both for the convex and non convex part.
We first keep a learning rate equal to $10^{-3}$ and a batch size equal to $300$.
We take $50000$ gradient iterations. Results are given in table \ref{tab:convNonConvGaussian} sampling $X \sim \mathcal{N}(0,1)^2$ and \ref{tab:convNonConvUniform} with $X \sim  \mathbf{U}([-2,2]^2)$. Best of 10 runs are given. Surprisingly, the feedforward network behaves very badly especially sampling a Gaussian law.

\begin{table}
    \centering
        \caption{MSE with  $X \sim \mathcal{N}(0,1)^2$.}
    \label{tab:convNonConvGaussian}
    \begin{tabular}{|c|c|c|c|} \hline
       Network       & $f_5$ & $f_6$ & $f_7$  \\ \hline
      Feedforward   &  1.6 &  1.99  &  1.7e-3  \\ \hline
      \cite{amos2017input}   & 0.019  & 0.073   &  6.5e-6 \\ \hline
      GroupMax  &  0.057  &  0.21  &  8e-7  \\ \hline
    \end{tabular}
\end{table}
 \begin{table}
    \centering
     \caption{MSE with $X \sim  \mathbf{U}([-2,2]^2)$.}
    \label{tab:convNonConvUniform}
    \begin{tabular}{|c|c|c|c|} \hline
       Network       & $f_5$ & $f_6$ & $f_7$  \\ \hline
      Feedforward   &  0.01& 0.13   & 1.8e-4    \\ \hline
      \cite{amos2017input}   & 5.8e-4  & 2.7e-3   &  5e-7  \\ \hline
      GroupMax  & 3.5e-3   & 6e-3   &  4e-7  \\ \hline
    \end{tabular}
\end{table}
\begin{Rem}
It was possible to get better results for the $f_6$ function using the feedforward network using up to $80$ neurons or increasing the number of layers. The results obtained  were not as good as the ones obtained by the other networks.
\end{Rem}
We test the GroupMax network on the two cases with $12$ neurons on each layer, with a group size $K$ equal to 3, with different numbers of layers $q$ on table \ref{tab:NLSway1DNonConGauss} \ref{tab:NLSway1DNonConUnif}. The best of $10$ runs is given. Sampling an uniform law,  the error clearly decreases with the number of layers while sampling a gaussian law this decrease is not observed.
\begin{table}
    \centering
     \caption{Influence on the MSE  of the Number of Layers q Sampling $X \sim \mathcal{N}(0,1)^2$.}
    \label{tab:NLSway1DNonConGauss}
   \begin{tabular}{|c|c|c|c|}  \hline
  q       & $f_5$ & $f_6$  & $f_7$\\ \hline
  3        & 0.052 & 0.088  & 7e-7\\ \hline
  4        & 0.033 & 0.099 & 3e-6\\ \hline
  5        & 0.068 & 0.051 & 3e-6 \\ \hline
    \end{tabular}
\end{table}
\begin{table}
    \centering
        \caption{Influence on the MSE of the Number of Layers q Sampling  $X \sim  \mathbf{U}([-2,2]^2)$.}
    \label{tab:NLSway1DNonConUnif}
   \begin{tabular}{|c|c|c|c|}  \hline
  q       & $f_5$ & $f_6$  & $f_7$\\ \hline
  3        & 2.3e-3 & 4.8e-3  &  1.9e-7 \\ \hline
  4        & 9.6e-4 & 3.9e-3  &  5e-7\\ \hline
  5        & 5.9e-4 &  1.8e-3  & 5e-7 \\ \hline
    \end{tabular}
\end{table}

 \subsection{Convexity in Higher Dimension}
 \noindent We try to interpolate here a function in higher dimension.
 We take two cases:
 \begin{enumerate}
     \item First, it is the square of the $L_2$ norm.
     \begin{align}
         f_8(x) = ||x||^2
     \end{align}
     \item For the second we generate randomly  $A$ a positive definite matrix in dimension $d$ and take
     \begin{flalign}
     f_9(x)= \dps{\sum_{i=1}^d} (|x_i| + |1-x_i|) + x\trans A x
     \end{flalign}
 \end{enumerate}
 For the network \cite{amos2017input}, we keep on using 3 hidden layers with 10 neurons. As for the feedforward network we also take 3 layers with 10 neurons.
 For the GroupMax we take 5 layers of 10 neurons with a group size of 2. The learning rate is still equal to $10^{-3}$ and we take $100000$ gradient iterations.
 \begin{table}
     \centering
             \caption{MSE for $f_8$  Sampling $X \sim \mathcal{N}(0,1)^d$.
     \label{tab:convNDGaussianF1}}
     \begin{tabular}{|c|c|c|c|c|} \hline
      Dimension ($d$)    &  2 & 3 & 4 & 5  \\ \hline
      Feedforward     & 8e-4  & 1e-3 &  5e-3 & 0.016 \\ \hline
      \cite{amos2017input}   &  2.8e-3 & 1.2e-2 &6.9e-2   & 0.197 \\ \hline
     GroupMax   & 5.3e-4 & 5.1e-3  & 1.3e-2 &  0.030\\ \hline
     \end{tabular}
 \end{table}
 \begin{table}
     \centering
        \caption{MSE for $f_9$  Sampling $X \sim \mathcal{N}(0,1)^d$.
     \label{tab:convNDGaussianF2}}
     \begin{tabular}{|c|c|c|c|c|} \hline
      Dimension    &  2 & 3 & 4 & 5  \\ \hline
      Feedforward     & 1.2e-3 &  6.7e-3 & 0.026 &  0.105 \\ \hline
      \cite{amos2017input}   & 6e-3 & 3.2e-2 & 0.141 & 0.361 \\ \hline
     GroupMax   & 3.9e-3 & 2.8e-2 &  0.075&  0.228 \\ \hline
     \end{tabular}

 \end{table}
\begin{table}
     \centering
         \caption{MSE for $f_8$  Sampling $X \sim  \mathbf{U}([-2,2]^d)$.
     \label{tab:convNDGaussUniformF1}}
     \begin{tabular}{|c|c|c|c|c|} \hline
      Dimension    &  2 & 3 & 4 & 5  \\ \hline
       Feedforward     &  2.7e-4 & 7.7e-4& 1.6e-3 & 5.6e-3 \\ \hline
      \cite{amos2017input}   &  7.5e-4 & 5.5e-3 & 1.8e-2 & 4.5e-2 \\ \hline
     GroupMax   & 3.9e-4 & 1.5e-3 & 4.4e-3 & 1.2e-2 \\ \hline
     \end{tabular}
 \end{table}
 \begin{table}
     \centering
         \caption{MSE for  $f_9$  Sampling $X \sim  \mathbf{U}([-2,2]^d)$.
     \label{tab:convNDGaussUniformF2}}
     \begin{tabular}{|c|c|c|c|c|} \hline
      Dimension    &  2 & 3 & 4 & 5  \\ \hline
       Feedforward     &  7.8e-4  & 9.2e-3 & 1.2e-2 & 7.6e-2 \\ \hline
      \cite{amos2017input}   & 2.6e-3 & 2.3e-2 & 8.7e-2 & 0.38 \\ \hline
     GroupMax   & 2e-3 & 2.8e-2 &  6.3e-2&  0.17\\ \hline
      \end{tabular}
 \end{table}
 Results depending on the dimension of the problem are given in table \ref{tab:convNDGaussianF1},\ref{tab:convNDGaussianF2} ,\ref{tab:convNDGaussUniformF1}, \ref{tab:convNDGaussUniformF2} either sampling using a gaussian law for $X$ in equation \eqref{eq:minEq}, or sampling $X$ uniformly in $[-2,2]^d$. The best of $10$ runs is given. As an $L_2$ interpolator, classical feedforward seems to be the best and the GroupMax slightly  outperforms the \cite{amos2017input} network. Notice that we did not try other activation functions for the \cite{amos2017input} network and did not play with the number of neurons to upgrade the results.
 On the same cases we then increase the number of layers in dimension $5$ and give the results in table \ref{tab:dim5} using the GroupMax network  depending on the number of layers $q$. The best of $10$ runs is given. Clearly we see that this increase of the number of layers improves the results even if the approximation of $f_9$ remains not very good.\\
 \begin{table}
     \centering
         \caption{MSE for $f_8$ and $f_9$   Depending on the Number of Layers $q$ Sampling $X \sim  \mathbf{U}([-2,2]^5)$. }
     \label{tab:dim5}
     \begin{tabular}{|c|c|c|c|c|} \hline
       q   &  4  &  6 & 7  &  8 \\  \hline
        $f_8$  &  0.021 &   0.012 & 8.1e-3  &  7.7e-3 \\  \hline
        $f_9$ & 0.278  & 0.164  & 0.155   &  0.120 \\  \hline
     \end{tabular}
 \end{table}
 At last  we try to approximate a function in very high dimension :
     \begin{align}
         f_{10}(x,y) = -\frac{1}{2n} x\trans  x + \frac{1}{2m} y  \trans y 
     \end{align}
     where $x \in \R^n$ and $y \in \R^m$, with $n=376$, $m=17$.
     We train the different methods to approximate $f_{10}$ using   $X\sim (\mathcal{N}(0,1))^n$ and $Y \sim (\mathcal{N}(0,1))^m$. We estimate the MSE for each method in table \ref{tab:funcReviewer} depending on the number of layers $q$ taking $10$ neurons for \cite{amos2017input} network, $20$ neurons for the feedforward and $12$ neurons with a group size of $2$ for the GroupMax network. As previously, we take the best result of ten runs.
     The MSE obtained is small for all methods.
     The feedforward network's results are nearly independent of the number of layers while the two other methods get better results as the number of layers increases and tend to outperform the feedforward network.
     
     \begin{table}
     \begin{center}
         \caption{MSE for Function $f_{10}$ Depending on the Number of Layers $q$. }
     \label{tab:funcReviewer}
     \begin{tabular}{|c|c|c|c|} \hline
        q  &  Feed forward & \cite{amos2017input} & GroupMax \\ \hline
        3 & 0.0025  & 0.0041  &   0.0073 \\ \hline
        5 & 0.0025  & 0.0029   &  0.0037  \\ \hline
        7 & 0.0024  &  0.0022  &  0.0024   \\ \hline
        9 & 0.0025  &  0.0019   &  0.0019    \\ \hline
        \end{tabular}
     \end{center}
     \end{table}

 \section{Conclusion}
\noindent  A new effective network has been developed to approximate convex function or partially convex functions by cuts or conditional cuts. This network gives similar results to the best networks developed giving a convex or partially convex solution. This approximation by cuts can be used in many application where convexity of the approximation is required or could be used in multistage linear continuous stochastic optimization where of the Bellman values by cuts is necessary to use a Linear Programming solver.

\section{Acknowledgement}
\noindent We thank Vincent Lemaire and Gilles Pagès for helpful discussions.


 \end{document}